\documentclass[12pt]{article}
\usepackage{amsmath, amsthm, amssymb}
\usepackage{url}

\newcommand{\NT}{\mathcal{NT}}
\newcommand{\Tr}{\mathcal{T}}
\newcommand{\M}{\mathcal{M}}
\newcommand{\F}{\mathbb{F}}

\newtheorem{thm}{Theorem}
\newtheorem{defn}[thm]{Definition}
\newtheorem{lem}[thm]{Lemma}
\newtheorem{cor}[thm]{Corollary}

\newtheorem{prop}{Proposition}

\title{On Parameters of Subfield Subcodes of Extended Norm-Trace Codes}
\author{Heeralal Janwa and Fernando L. Pi\~nero}

\begin{document}

\maketitle

\centerline{\scshape Heeralal Janwa }
\medskip
{\footnotesize
 \centerline{Department of Mathematics}
   \centerline{Faculty of Natural Sciences}
   \centerline{University of Puerto Rico -- R\'io Piedras Campus}
   \centerline{ San Juan, Puerto Rico, 00925 USA}
} 

\medskip

\centerline{\scshape Fernando L. Pi\~nero}
\medskip
{\footnotesize
 \centerline{Institute of Statistics and Computerized Information Systems}
   \centerline{Faculty of Business Administration}
   \centerline{University of Puerto Rico -- R\'io Piedras Campus}
   \centerline{ San Juan, Puerto Rico, 00925 USA}
}

\begin{abstract}

In this article we describe how to find the parameters of subfield subcodes of extended Norm--Trace codes. With a Gr\"obner basis of the ideal of the $\F_{q^r}$ rational points of the Norm--Trace curve one can determine the dimension of the subfield subcodes or the dimension of the trace code. We also find a BCH--like bound from the minimum distance of the original supercode.

\end{abstract}
{\bf AMS Subject Classification:} {Primary: 14G50, 11,T71 Secondary: 13P10 }
 {\bf Keywords:} {Coding Theory, Gr\"obner Bases, Binary Codes, Linear Codes, Algebraic Geometry codes, extended Norm--Trace codes}

\section{Background and Motivation}

Our aim is to understand subfield subcodes of extended Norm--Trace codes. These codes include AG codes from some quasi-Hermitian curves and Norm-Trace curves. BCH codes and binary classical Goppa codes are subfield subcodes of {RS} codes and  subfield subcodes of special AG codes of genus 0 curves (i.e. very special MDS codes). These subfield subcodes inherit good parameters, good automorphism groups and efficient encoding and decoding algorithms from their AG supercode. We have previously used Gr\"obner bases in \cite{JP14} to study subfield subcodes of Hermitian curves. We also found codes which are optimal or best known. Now we generalize these results to subfield subcodes of extended Norm-Trace codes. In this article we provide a Gr\"obener basis framework, and prove results that help us give explicit algorithms for computing the parameters of subfield subcodes of Extended Norm--Trace codes. These results are easily adaptable to encoding and decoding. We have implemented these algorithms  in symbolic software. We finish 
with some subfield subcodes of extended Norm--Trace codes which either have optimal parameters or are as good as any known code. 

We fix $q=p^l$ a prime power and $r > 1$ a positive integer. In addition $t$ is a prime power such that $\F_t \subseteq \F_{q^r}$. We denote the trace function of $\F_{q^r}$ over $\F_{t}$ by $\Tr_{\F_{q^r}/ \F_{t}  }$.  Note that $ \Tr_{\F_{q^r}/ \F_{t}  }(y) = \sum\limits_{i=0}^{r-1} y^{q^i}$. 

\section{Subfield Subcodes} For the material in this section, we refer the reader to \cite{S90} .

\begin{defn} 
Let $C$ be a code over $\F_{q^r}$ of length $n$. The subfield subcode of $C$ is defined as $$C|\F_t := C \cap \F_t^n.$$
The trace code of $C$ is defined as
$$ \Tr_{\F_{q^r}/ \F_{t}  }(C) := \{ (\Tr_{\F_{q^r}/ \F_{t}  }(c_1),\Tr_{\F_{q^r}/ \F_{t}  }(c_2),\ldots,\Tr_{\F_{q^r}/ \F_{t}  }(c_n)) | c \in C \}.$$
\end{defn}

Both $C|\F_t$ and $\Tr_{\F_{q^r}/ \F_{t}  }(C)$ are linear codes over $\F_t$ of length $n$.  In fact: 
\begin{prop}\cite{S90}[Delsarte's Theorem]
Let $C$ be  a code over $\F_{q^r}$. Then
$$(C|\F_t)^{\perp} = \Tr_{\F_{q^r}/ \F_{t}  }(C^{\perp}).$$
\end{prop}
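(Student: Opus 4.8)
The plan is to prove the two inclusions separately, obtaining the harder one by a short duality trick rather than directly. Throughout, write $\langle\cdot,\cdot\rangle$ for the standard bilinear form over whichever field is relevant, and abbreviate $\Tr:=\Tr_{\F_{q^r}/\F_t}$. Recall two standard facts: $\Tr$ is $\F_t$-linear, and the pairing $(\alpha,\beta)\mapsto\Tr(\alpha\beta)$ on $\F_{q^r}$ is nondegenerate. Recall also that $(C^{\perp})^{\perp}=C$ for $\F_{q^r}$-linear $C$, and $D^{\perp\perp}=D$ for any $\F_t$-subspace $D\subseteq\F_t^n$ (dual taken over $\F_t$).

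First I would establish the easy inclusion $\Tr(C^{\perp})\subseteq(C|\F_t)^{\perp}$. Take $d\in C^{\perp}$ and $c\in C|\F_t$. Since each $c_i\in\F_t$, $\F_t$-linearity of $\Tr$ gives
$$\sum_i\Tr(d_i)c_i=\sum_i\Tr(c_id_i)=\Tr\left(\sum_ic_id_i\right)=\Tr(\langle c,d\rangle)=\Tr(0)=0,$$
so $\Tr(d)$ is orthogonal to $c$.

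The crux is the reverse inclusion, and for it I would instead prove $(\Tr(C^{\perp}))^{\perp}\subseteq C|\F_t$, with the dual again taken inside $\F_t^n$. Let $u\in\F_t^n$ be orthogonal to every element of $\Tr(C^{\perp})$; for $d\in C^{\perp}$ this reads $\Tr(\langle u,d\rangle)=0$, exactly as in the previous step. Now invoke $\F_{q^r}$-linearity of $C^{\perp}$: for every $\alpha\in\F_{q^r}$ we also have $\alpha d\in C^{\perp}$, hence $\Tr(\alpha\langle u,d\rangle)=\Tr(\langle u,\alpha d\rangle)=0$. By nondegeneracy of the trace pairing this forces $\langle u,d\rangle=0$ for every $d\in C^{\perp}$, so $u\in(C^{\perp})^{\perp}=C$; since also $u\in\F_t^n$, we get $u\in C|\F_t$.

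Finally I would assemble the pieces: dualizing $(\Tr(C^{\perp}))^{\perp}\subseteq C|\F_t$ over $\F_t$ and using biduality gives $(C|\F_t)^{\perp}\subseteq(\Tr(C^{\perp}))^{\perp\perp}=\Tr(C^{\perp})$, which combined with the easy inclusion yields the asserted equality. The point worth stressing is the use of $\F_{q^r}$-linearity of $C^{\perp}$ in the crux step: without the freedom to replace $d$ by $\alpha d$ one obtains only $\Tr(\langle u,d\rangle)=0$, which is strictly weaker than $\langle u,d\rangle=0$, and it is precisely the nondegenerate trace form that upgrades the former to the latter. This is the only place the argument has real content; I do not anticipate other obstacles, and while a direct $\F_t$-dimension count is possible it is messier.
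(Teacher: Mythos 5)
Your proof is correct: the easy inclusion, the upgrade from $\Tr_{\F_{q^r}/\F_t}(\langle u,d\rangle)=0$ to $\langle u,d\rangle=0$ via $\F_{q^r}$-scaling of $d$ and nondegeneracy of the trace form, and the final biduality step over $\F_t$ (legitimate since $\Tr_{\F_{q^r}/\F_t}(C^{\perp})$ is an $\F_t$-subspace of $\F_t^n$) all go through. The paper itself offers no proof of this proposition --- it is quoted as Delsarte's Theorem from Stichtenoth \cite{S90} --- and your argument is essentially the classical one found there, so there is nothing to reconcile.
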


The map $x \mapsto x^t$ is an automorphism of $\F_{q^r}$ which fixes $\F_t$ pointwise. One extends this map to a linear space as follows:

\begin{defn}\cite{S90}
Let $C$ be a code over $\F_{q^r}$ of length $n$.  Define $$ C^{(t)} := \{ (c_1^t,c_2^t,\ldots,c_n^t) | c  \in C \}.$$
\end{defn}

Stichtenoth \cite{S90} showed that $C|\F_t$ and $\Tr_{\F_{q^r}/ \F_{t}  }(C)$ may be seen as codes over $\F_{q^r}$.
\begin{prop}\cite{S90}
Suppose $q^r = t^{m}$. Let $C$ be a code over $\F_{q^r}$. Let $C^o := \bigcap\limits_{i=0}^{m-1} C^{(t^i)}$ and $C ^\wedge:= \sum\limits_{i=0}^{m-1} C^{(t^i)}$. Then  $C^o$ is the $\F_{q^r}$--linear code spanned by $C|\F_t$ over $\F_{q^r}$ and  $C^\wedge$ the $\F_{q^r}$-linear code spanned by $\Tr_{\F_{q^r}/ \F_{t}  }(C)$ over $\F_{q^r}$. Moreover $C^o$ and $C|\F_t$ have the same dimension and minimum distance. Likewise, $C^\wedge$ and $\Tr_{\F_{q^r} / \F_{t}}(C)$ also have the same dimension and minimum distance.
\end{prop}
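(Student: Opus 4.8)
The plan is to exploit the cyclic Galois action of the Frobenius $\sigma\colon x\mapsto x^t$ on $\F_{q^r}^n$. Since $q^r=t^m$, this $\sigma$ generates $\mathrm{Gal}(\F_{q^r}/\F_t)$, a cyclic group of order $m$ with fixed field $\F_t$; extending $\sigma$ to $\F_{q^r}^n$ coordinatewise we have $\sigma^i(v)=(v_1^{t^i},\dots,v_n^{t^i})$, $\sigma^m=\mathrm{id}$, $C^{(t^i)}=\sigma^i(C)$, and $v\in\F_t^n$ exactly when $\sigma(v)=v$. I would begin by recording the routine facts that $\sigma$ cyclically permutes $\sigma^0(C),\dots,\sigma^{m-1}(C)$, so that both $C^o=\bigcap_i\sigma^i(C)$ and $C^\wedge=\sum_i\sigma^i(C)$ are $\sigma$-stable; that $C^o\subseteq C$; and that $C^o\cap\F_t^n=C\cap\F_t^n=C|\F_t$, because a vector of $\F_t^n$ lies in $\sigma^i(C)$ iff it lies in $C$.

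The core is a descent lemma: if $V\subseteq\F_{q^r}^n$ is an $\F_{q^r}$-subspace with $\sigma(V)=V$, then $V=\langle V\cap\F_t^n\rangle_{\F_{q^r}}$, $\dim_{\F_{q^r}}V=\dim_{\F_t}(V\cap\F_t^n)$, and moreover every $v\in V$ can be written $v=\sum_{j=1}^m\gamma_j v_j$ for a fixed $\F_t$-basis $\gamma_1,\dots,\gamma_m$ of $\F_{q^r}$, with each $v_j\in V\cap\F_t^n$ and $\mathrm{supp}(v_j)\subseteq\mathrm{supp}(v)$. I would prove it by decomposing $v$ coordinatewise in the basis $\{\gamma_j\}$, which produces the $v_j\in\F_t^n$ with the claimed support bound; applying $\sigma^i$ gives $\sigma^i(v)=\sum_j\gamma_j^{t^i}v_j$, so the tuple $(\sigma^0(v),\dots,\sigma^{m-1}(v))$ is the image of $(v_1,\dots,v_m)$ under the Moore matrix $M=(\gamma_j^{t^i})_{i,j}$, which is invertible because the $\gamma_j$ are $\F_t$-linearly independent. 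Hence each $v_j$ is an $\F_{q^r}$-linear combination of the vectors $\sigma^i(v)\in V$, so $v_j\in V\cap\F_t^n$; this gives $V\subseteq\langle V\cap\F_t^n\rangle_{\F_{q^r}}$, the reverse inclusion is immediate, and the dimension equality follows since $\F_t$-linearly independent vectors of $\F_t^n$ remain $\F_{q^r}$-linearly independent in $\F_{q^r}^n$.

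Granting the lemma, the statement about $C^o$ is immediate: apply it to $V=C^o$ and use $C^o\cap\F_t^n=C|\F_t$ to get $C^o=\langle C|\F_t\rangle_{\F_{q^r}}$ and $\dim_{\F_{q^r}}C^o=\dim_{\F_t}(C|\F_t)$; for the minimum distances, $C|\F_t\subseteq C^o$ gives $d(C^o)\le d(C|\F_t)$, and decomposing a minimum-weight word of $C^o$ as in the lemma produces a nonzero word of $C|\F_t$ of weight at most that of the original, giving $d(C|\F_t)\le d(C^o)$. For the trace code I would dualize. Since $x\mapsto x^t$ preserves the standard inner product up to applying $\sigma$, one has $(\sigma^i(C))^\perp=\sigma^i(C^\perp)$, whence $(C^\wedge)^\perp=\bigl(\sum_i\sigma^i(C)\bigr)^\perp=\bigcap_i\sigma^i(C^\perp)=(C^\perp)^o$. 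Combining the part just proved (applied to $C^\perp$) with Delsarte's Theorem, which identifies $C^\perp|\F_t$ with the dual of $\Tr_{\F_{q^r}/\F_t}(C)$, gives $\dim_{\F_{q^r}}C^\wedge=n-\dim_{\F_{q^r}}(C^\perp)^o=n-\dim_{\F_t}(C^\perp|\F_t)=\dim_{\F_t}\Tr_{\F_{q^r}/\F_t}(C)$. Since $\Tr_{\F_{q^r}/\F_t}(c)=\sum_i\sigma^i(c)$ lies in $C^\wedge\cap\F_t^n$, we have $\langle\Tr_{\F_{q^r}/\F_t}(C)\rangle_{\F_{q^r}}\subseteq C^\wedge$, and equality of $\F_{q^r}$-dimensions forces $C^\wedge=\langle\Tr_{\F_{q^r}/\F_t}(C)\rangle_{\F_{q^r}}$; applying the descent lemma to $V=C^\wedge$ and comparing $\F_t$-dimensions shows $C^\wedge\cap\F_t^n=\Tr_{\F_{q^r}/\F_t}(C)$, after which the minimum-distance equality follows exactly as for $C^o$.

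I expect the descent lemma to be the main obstacle: one must argue carefully that a $\sigma$-stable $\F_{q^r}$-subspace is spanned by its $\F_t$-rational vectors, and the Moore-matrix invertibility together with the stability of linear independence under field extension is what keeps this at the level of elementary linear algebra rather than requiring Galois cohomology. Everything else — the support-preserving decomposition feeding both minimum-distance equalities, and the duality bookkeeping for the trace code — is routine once Delsarte's Theorem and $(\sigma^i(C))^\perp=\sigma^i(C^\perp)$ are in hand.
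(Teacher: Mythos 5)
The paper itself offers no proof of this proposition --- it is quoted from Stichtenoth \cite{S90} --- so there is no internal argument to compare against; judged on its own, your proof is correct and complete. Your descent lemma (a $\sigma$-stable $\F_{q^r}$-subspace of $\F_{q^r}^n$ is spanned by its $\F_t$-rational vectors, with a support-preserving decomposition $v=\sum_j\gamma_j v_j$) is exactly the right intermediate statement, and proving it by coordinatewise expansion in an $\F_t$-basis together with invertibility of the Moore matrix $(\gamma_j^{t^i})_{i,j}$ is a clean elementary substitute for Galois descent; it also delivers both the dimension and the minimum-distance equalities in one stroke. Your handling of $C^\wedge$ via $(\sigma^i(C))^\perp=\sigma^i(C^\perp)$, $(C^\wedge)^\perp=(C^\perp)^o$ and Delsarte's theorem is legitimate and non-circular, since Delsarte's theorem is stated independently (and proved independently in \cite{S90}) earlier in the paper. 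Two minor polish points, neither a gap: (i) you should record that each $C^{(t^i)}=\sigma^i(C)$ is itself $\F_{q^r}$-linear (because $\lambda\,\sigma^i(c)=\sigma^i(\lambda^{t^{m-i}}c)$), which the stability claims and the descent lemma quietly use; (ii) the $C^\wedge$ half can be done directly, without Delsarte, by the very same Moore-matrix trick: for $c\in C$ the words $\Tr_{\F_{q^r}/\F_t}(\gamma_j c)=\sum_i\gamma_j^{t^i}\sigma^i(c)$ lie in $\Tr_{\F_{q^r}/\F_t}(C)$, and inverting the Moore matrix recovers each $\sigma^i(c)$ as an $\F_{q^r}$-combination of them, giving $C^\wedge\subseteq\langle\Tr_{\F_{q^r}/\F_t}(C)\rangle_{\F_{q^r}}$ outright; that variant is essentially Stichtenoth's original dual-basis/trace argument, while your route trades it for duality bookkeeping already available in the paper.
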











\section{Extended Norm--Trace codes}
The extended Norm--Trace curve and their associated linear codes were introduced in \cite{BO07}. Extended Norm--Trace curves are defined in greater generality but for simplicity we fix $u | {\frac{q^r-1}{q-1}}$.

\begin{defn}

Suppose $V = \{ P_1, P_2, \ldots, P_n \} \subseteq \F_{q^r}^m$ is a finite set and $L$ is an $\F_{q^r}$ linear subspace of polynomials. The \emph{affine variety code} is defined as $$C(V, L) := \{  (f(P_1), f(P_2), \ldots, f(P_n)) \ | f \in L  \}.$$

\end{defn}

The Trace code of an affine variety code is expressed in a simple way.

\begin{lem}
Suppose $V = \{ P_1, P_2, \ldots, P_n \} \subseteq \F_{q^r}^m$ is a finite set and $L$ is an $\F_{q^r}$ linear subspace of polynomials. Then 
$$\Tr_{\F_{q^r}/ \F_{t}  }(C(V, L) )  = C(V, \Tr_{\F_{q^r}/ \F_{t}  }(L)) = C(V, \sum\limits_{i=0}^{m-1}(L^{(t^i)})) .$$ 
\label{lem:traceaffinecode}
\end{lem}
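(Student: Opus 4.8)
The plan is to reduce the whole statement to one elementary observation: for each point $P \in \F_{q^r}^m$ the evaluation map $\mathrm{ev}_P \colon \F_{q^r}[X_1,\dots,X_m] \to \F_{q^r}$, $h \mapsto h(P)$, is a ring homomorphism, so it commutes with the iterated Frobenius $y \mapsto y^{t^i}$ and with finite sums. Concretely, for $f \in L$ let $f^{(t^i)}$ denote the polynomial $f$ raised to the $t^i$-th power in $\F_{q^r}[X_1,\dots,X_m]$, and set $\Tr_{\F_{q^r}/\F_t}(f) := \sum_i f^{(t^i)}$, the sum running over the $[\F_{q^r}:\F_t]$ Frobenius powers. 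Because $\Tr_{\F_{q^r}/\F_t}(y) = \sum_i y^{t^i}$ on $\F_{q^r}$, applying $\mathrm{ev}_P$ gives $\bigl(\Tr_{\F_{q^r}/\F_t}(f)\bigr)(P) = \sum_i f(P)^{t^i} = \Tr_{\F_{q^r}/\F_t}(f(P))$ for every $P$. With $\Tr_{\F_{q^r}/\F_t}(L) := \{\Tr_{\F_{q^r}/\F_t}(f) : f \in L\}$, the right-most expression $\sum_{i=0}^{m-1}(L^{(t^i)})$ in the statement is just this set written out, so the second equality is an unwinding of notation together with the formula for the trace as a sum of powers of Frobenius.

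For the first equality it suffices to see that both sides describe the same set of vectors in $\F_t^n$. By the definitions of the trace code and of $C(V,L)$,
$$\Tr_{\F_{q^r}/\F_t}(C(V,L)) = \{(\Tr_{\F_{q^r}/\F_t}(f(P_1)),\dots,\Tr_{\F_{q^r}/\F_t}(f(P_n))) : f \in L\}.$$
By the observation above, $\Tr_{\F_{q^r}/\F_t}(f(P_j)) = \bigl(\Tr_{\F_{q^r}/\F_t}(f)\bigr)(P_j)$ for each $j$, so this set equals $\{(g(P_1),\dots,g(P_n)) : g \in \Tr_{\F_{q^r}/\F_t}(L)\} = C(V,\Tr_{\F_{q^r}/\F_t}(L))$. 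Every coordinate here lies in $\F_t$, which also re-proves in this case that the trace code is $\F_t$-linear of length $n$.

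Two points deserve care rather than being real obstacles. First, one must not confuse the operation $C \mapsto C^{(t)}$ on codes, which acts coordinatewise on evaluation vectors, with $L \mapsto L^{(t)}$ on polynomials; the role of the ring-homomorphism property of $\mathrm{ev}_P$ is precisely to intertwine them, namely $C(V,L)^{(t^i)} = C(V,L^{(t^i)})$, and it is cleanest to record this for a single $i$ before summing over $i$. Second, since $C(V,L)$ depends only on the residues of the elements of $L$ modulo the vanishing ideal $I(V)$ of $V$, one may reduce $f^{(t^i)}$ and $\Tr_{\F_{q^r}/\F_t}(f)$ modulo $I(V)$ freely without changing any of the codes involved; this should be stated explicitly, since it is what legitimizes the later Gr\"obner-basis computations. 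I expect no step to be difficult: the only mild subtlety is being careful about the meaning of $L^{(t^i)}$ and $\Tr_{\F_{q^r}/\F_t}(L)$ for spaces of polynomials (as opposed to the code-level operations) and about the number of terms in the trace.
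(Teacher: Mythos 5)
Your treatment of the first equality is correct and is essentially the paper's own argument: evaluation commutes with Frobenius and with sums, so $\bigl(\Tr_{\F_{q^r}/\F_t}(f)\bigr)(P_j)=\Tr_{\F_{q^r}/\F_t}(f(P_j))$, and the two codes coincide by unwinding the definitions (the paper writes exactly this via ``$c=(g(P_1),\dots,g(P_n))$ with $g=\Tr_{\F_{q^r}/\F_t}(f)$''). Your remark that one may reduce modulo $I(V)$ without changing the codes is a useful addition, and you are right to read the number of Frobenius terms as $[\F_{q^r}:\F_t]$ even though the statement recycles the letter $m$ for the ambient dimension.

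However, your claim that the second equality is ``just this set written out'' conflates two different objects. The image of the trace map is $\Tr_{\F_{q^r}/\F_t}(L)=\{\sum_i f^{(t^i)} : f\in L\}$, with the \emph{same} $f$ in every Frobenius twist, whereas $\sum_i L^{(t^i)}=\{\sum_i f_i^{(t^i)} : f_i\in L\}$ allows independent choices and is an $\F_{q^r}$-linear space. Correspondingly $C(V,\Tr_{\F_{q^r}/\F_t}(L))$ lies in $\F_t^n$, while $C(V,\sum_i L^{(t^i)})$ is $\F_{q^r}$-linear and in general strictly larger (already for $L=\F_{q^r}\cdot 1$ and a single point the two codes are $\F_t$ versus $\F_{q^r}$). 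So the second equality is not a notational unwinding: it holds only in the sense of Stichtenoth's proposition quoted in the paper, namely $C(V,\sum_i L^{(t^i)})=\Tr_{\F_{q^r}/\F_t}(C(V,L))^{\wedge}$ is the $\F_{q^r}$-span of the trace code, which has the same dimension and minimum distance --- and that is precisely how the paper uses it later, via $\dim\Tr_{\F_{q^r}/\F_t}(\NT_u(s'))=\dim\NT_u(s')^{\wedge}$. (The paper's own proof is silent on this point too, but your write-up asserts the set equality explicitly, so you should either prove the inclusion-with-span statement via that proposition or weaken the claim accordingly.)
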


\begin{proof}
Let $c \in \Tr_{\F_{q^r}/ \F_{t}  }(C(V, L) )$. The vector $c$ is equal to a vector of the form $(\Tr_{\F_{q^r}/ \F_{t}}(f(P_1)), \Tr_{\F_{q^r}/ \F_{t}}(f(P_2)), \ldots, \Tr_{\F_{q^r}/ \F_{t}}(f(P_n)))$ for some $f \in L$.  Therefore the codeword $c = (g(P_1), g(P_2), \ldots, g(P_n))$ where $g = \Tr_{\F_{q^r}/ \F_{t}}(f)$. This implies the codeword $c$ is in the code $C(V, \Tr_{\F_{q^r}/ \F_{t}  }(L))$. The converse is similar. \end{proof}

\begin{defn}

The curve $X^u-\Tr_{\F_{q^r} / \F_q}  (Y)=0$ over $\F_{q^r}$ is known as the \emph{Extended Norm--Trace curve  over $\F_{q^r}$ associated to $q$,$r$ and $u$.}. Denote by 
$$\NT_u := \{ (x,y) \in \F_{q^r}^2 \ |  x^u = \Tr_{\F_{q^r}/ \F_{q}  }(y)  \}.$$

\end{defn}

The authors in \cite{BO07} note that $\# \NT_u = q^{r-1}(u(q-1)+1)$ and the genus is $\frac{(q^{r-1}-1)(u-1)}{2}.$ When $u = \frac{q^r-1}{q-1}$ the curve is a Norm--Trace curve. When $r=2$ the curve is a quasi--Hermitian curve.

\begin{defn}
\emph{The $(q^{r-1}, u)$--weight} of a monomial $X^iY^j$ is $$\rho_{q^{r-1}, u}(X^iY^j) := q^{r-1}i+uj.$$ We denote the set of all monomials whose $(q^{r-1}, u)$--weight is at most $s$ by $\mathcal{M}_{q^{r-1}, u}(s)$.
\end{defn}

\begin{defn}

For $s$, the Extended Norm--Trace code of weight $s$ is $$\NT_{u}(s) := C(\NT_u, \M_{q^{r-1}, u}(s)).$$

\end{defn}

\begin{prop}\cite{BO07}
$$\mathcal{NT}_u(s)^{\perp} = \mathcal{NT}_u(q^{r-1}(u-1) + u(q^{r-1}-1)-1-s).$$
\label{prop:DualENTCode}
\end{prop}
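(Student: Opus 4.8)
Write $s^{\ast}=q^{r-1}(u-1)+u(q^{r-1}-1)-1-s$. The plan is the usual two-step argument: first establish the orthogonality inclusion $\NT_u(s)\subseteq\NT_u(s^{\ast})^{\perp}$, and then show $\dim\NT_u(s)+\dim\NT_u(s^{\ast})=\#\NT_u$; the two together force $\NT_u(s)^{\perp}=\NT_u(s^{\ast})$.

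For the orthogonality, observe that if $f\in\M_{q^{r-1},u}(s)$ and $g\in\M_{q^{r-1},u}(s^{\ast})$ then $fg$ is an $\F_{q^r}$-linear combination of monomials $X^{a}Y^{b}$ with $\rho_{q^{r-1},u}(X^{a}Y^{b})\le s+s^{\ast}=q^{r-1}(u-1)+u(q^{r-1}-1)-1$; so by bilinearity it suffices to show that
$$\sum_{(x,y)\in\NT_u}x^{a}y^{b}=0\qquad\text{whenever}\qquad q^{r-1}a+ub\le q^{r-1}(u-1)+u(q^{r-1}-1)-1.$$
I would handle this by recognizing $\NT_u(s)$ as a one-point AG code in the sense of \cite{S90}. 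The curve $X^{u}=\Tr_{\F_{q^r}/\F_q}(Y)$ has a single place $P_{\infty}$ at infinity, at which $v_{P_{\infty}}(x)=-q^{r-1}$ and $v_{P_{\infty}}(y)=-u$, and since $\gcd(q^{r-1},u)=1$ the Weierstrass semigroup there is $\langle q^{r-1},u\rangle$; reducing monomials modulo $X^{u}-\Tr_{\F_{q^r}/\F_q}(Y)$ identifies $\M_{q^{r-1},u}(s)$ with a basis of the Riemann--Roch space $L(sP_{\infty})$, so $\NT_u(s)=C_{\mathcal{L}}(D,sP_{\infty})$ with $D$ the sum of the $\#\NT_u$ affine points. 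The duality $C_{\mathcal{L}}(D,G)^{\perp}=C_{\Omega}(D,G)=C_{\mathcal{L}}\!\bigl(D,\,D-G+(\eta)\bigr)$, valid for any Weil differential $\eta$ with $v_{P_i}(\eta)=-1$ and $\operatorname{res}_{P_i}(\eta)=1$ at every affine point $P_i$, then reduces the problem to exhibiting such an $\eta$ and computing its divisor $(\eta)$. A natural starting point, since the defining polynomial has $Y$-partial derivative the constant $-1$, is $\eta_{0}=dy/(y^{q^r}-y)$: it has a simple pole at each affine point, its only other pole is at $P_{\infty}$, and $(\eta_{0})=(dy)-(y^{q^r}-y)$ can be written down once one knows $(dy)$ together with the ramification of $y\colon\NT_u\to\mathbb{P}^{1}$, which away from $P_{\infty}$ is trivial except for total, tame ramification over each of the $q^{r-1}$ values $c$ with $\Tr_{\F_{q^r}/\F_q}(c)=0$. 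Its residues are constant on the unramified affine points but differ at those $q^{r-1}$ ramified ones; correcting this by multiplying $\eta_{0}$ by the function of $x$ that takes the prescribed constant value on each fibre, and tracking the effect on $(\eta)$, should yield $D-sP_{\infty}+(\eta)\sim s^{\ast}P_{\infty}$, which is the claim. A purely elementary alternative computes the power sums directly: write $\sum_{(x,y)\in\NT_u}x^{a}y^{b}=\sum_{x}x^{a}\bigl(\sum_{y:\,\Tr_{\F_{q^r}/\F_q}(y)=x^{u}}y^{b}\bigr)$, evaluate the inner sum over the coset of $\ker\Tr_{\F_{q^r}/\F_q}$ (with additive characters, or by iterating the fact that $\sum_{z\in\F_{q^r}}z^{e}$ is $-1$ when $q^{r}-1\mid e$ and $e>0$, and $0$ otherwise), and then sum over the $x$ with $x^{u}\in\F_q$ using that $\sum_{x}x^{e}$ is $-1$ when $u(q-1)\mid e$ and $e>0$, and $0$ otherwise; combining the two exponent thresholds produces exactly the bound on $q^{r-1}a+ub$.

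For the dimensions, the inclusion already gives ``$\le$'' in $\dim\NT_u(s)+\dim\NT_u(s^{\ast})\le\#\NT_u$, so it remains to get equality. I would compute $\dim\NT_u(s)$ from a Gr\"obner basis of the vanishing ideal $I(\NT_u)$ for a term order refining $\rho_{q^{r-1},u}$ in which $X^{u}$ is the leading term of $X^{u}-\Tr_{\F_{q^r}/\F_q}(Y)$: the resulting footprint is an $\F_{q^r}$-basis of $\F_{q^r}[X,Y]/I(\NT_u)$ made of monomials of $X$-degree $<u$ --- hence of pairwise distinct $(q^{r-1},u)$-weights --- and $\dim\NT_u(s)$ equals the number of footprint monomials of weight at most $s$. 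The key point is then that this multiset of weights is symmetric: $w$ occurs in it if and only if $q^{r-1}(u-1)+u(q^{r-1}-1)-w$ does. Granting the symmetry, $\dim\NT_u(s)+\dim\NT_u(s^{\ast})=\#\NT_u$ for every $s$, and together with the orthogonality this gives $\NT_u(s)^{\perp}=\NT_u(s^{\ast})$.

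The main obstacle is the one non-formal ingredient common to both routes: producing $\eta$ with constant residues and computing $(\eta)$ --- equivalently, the exact vanishing range of the power sums $\sum x^{a}y^{b}$, equivalently the stated symmetry of the footprint. This is where the arithmetic peculiar to the Norm--Trace curve enters --- additivity of $\Tr_{\F_{q^r}/\F_q}$, total tame ramification of $y$ exactly over $\Tr_{\F_{q^r}/\F_q}(y)=0$, and $\gcd(u,q)=1$ --- and it is what fixes the constant $q^{r-1}(u-1)+u(q^{r-1}-1)-1$ in the statement.
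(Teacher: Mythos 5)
The paper itself gives no proof of this proposition (it is quoted from \cite{BO07}), so the only thing to assess is whether your strategy would actually deliver the stated identity --- and there is a concrete error at the one place where you commit to a computation. For the vanishing ideal $\langle X^u-\Tr_{\F_{q^r}/\F_q}(Y),\,X^{u(q-1)+1}-X\rangle$ the convenient footprint (leading terms $Y^{q^{r-1}}$ and $X^{u(q-1)+1}$) is the box $\{X^iY^j:\,0\le i\le u(q-1),\,0\le j\le q^{r-1}-1\}$, of size $n=q^{r-1}(u(q-1)+1)$, with pairwise distinct weights; the involution $(i,j)\mapsto(u(q-1)-i,\,q^{r-1}-1-j)$ shows these weights are symmetric about $q^{r-1}u(q-1)+u(q^{r-1}-1)=u(q^r-1)$, \emph{not} about $q^{r-1}(u-1)+u(q^{r-1}-1)$ as you claim (your alternative footprint of $X$-degree $<u$ contains $Y^{q^r-1}$, of weight $u(q^r-1)$, so it cannot be symmetric about your smaller center either, and it is not simply ``all monomials of $X$-degree $<u$''). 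Hence the dimension identity you rely on, $\dim\NT_u(s)+\dim\NT_u(s^{\ast})=\#\NT_u$ with $s^{\ast}=q^{r-1}(u-1)+u(q^{r-1}-1)-1-s$, is false; it holds with $s^{\ast}=u(q^r-1)-1-s=n+2g-2-s$. Your AG route gives the same constant: with $v_{P_\infty}(x)=-q^{r-1}$, $v_{P_\infty}(y)=-u$, one-point duality yields $n+2g-2-s$, and $n+2g-2=u(q^r-1)-1\neq q^{r-1}(u-1)+u(q^{r-1}-1)-1$. In fact the printed proposition appears to carry a typo ($q^{r-1}(u-1)$ should read $q^{r-1}u(q-1)$): the paper's own first example uses $\NT_3(36)^{\perp}=\NT_3(8)$ over $\F_{16}$, i.e. $44-s$, whereas the printed formula would give $36-s$. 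So your plan, executed correctly, proves the corrected statement; as written, it asserts a false symmetry in order to land on the printed constant.

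Beyond that, the decisive content is still missing, as you acknowledge: neither route is carried out. You never establish the exact vanishing range of the power sums $\sum_{(x,y)\in\NT_u}x^ay^b$, nor exhibit a differential $\eta$ with simple poles and constant residues at the affine points and compute $(\eta)$; both are asserted to ``yield'' the claim. In the elementary route, note also that the outer variable $x$ does not range over all of $\F_{q^r}$ but only over the $u(q-1)+1$ roots of $X^{u(q-1)+1}-X$ (those $x$ with $x^u\in\F_q$), and the inner sum of $y^b$ over a coset of $\ker\Tr_{\F_{q^r}/\F_q}$ is exactly the delicate step (it depends on the $q$-ary expansion of $b$, not merely on divisibility of $b$); combining the two thresholds to get the precise cutoff is the crux and is not done. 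To complete the proof you should: (i) fix the complementary degree to $u(q^r-1)-1-s$; (ii) prove the orthogonality by showing $\sum_{(x,y)\in\NT_u}x^ay^b=0$ whenever $q^{r-1}a+ub\le u(q^r-1)-1$ (or via the differential argument); and (iii) pair it with the box-footprint symmetry above for the dimension count.
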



\section{Dimension of Subfield Subcodes}


Our aim is to find or bound the dimension of $\mathcal{NT}_u(s)|\F_t$. 

\begin{lem}
$$\dim\mathcal{NT}_u(s)|\F_t = n - \dim \Tr_{\F_{q^r}/ \F_t}(\mathcal{NT}_u(q^{r-1}(u-1) + u(q^{r-1}-1)-1-s))$$
\end{lem}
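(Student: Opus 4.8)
The plan is to combine Delsarte's Theorem with the self-duality structure of the extended Norm--Trace codes recorded in Proposition~\ref{prop:DualENTCode}. First I would apply Delsarte's Theorem (the Proposition stated just after the definition of subfield subcodes) to the code $C = \NT_u(s)$, which gives
$$(\NT_u(s)|\F_t)^{\perp} = \Tr_{\F_{q^r}/\F_t}(\NT_u(s)^{\perp}).$$
Taking dimensions of both sides and using that for any code $D$ of length $n$ one has $\dim D + \dim D^{\perp} = n$, I obtain
$$\dim \NT_u(s)|\F_t = n - \dim \Tr_{\F_{q^r}/\F_t}(\NT_u(s)^{\perp}).$$

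Next I would substitute the explicit description of the dual code. By Proposition~\ref{prop:DualENTCode}, $\NT_u(s)^{\perp} = \NT_u(q^{r-1}(u-1) + u(q^{r-1}-1) - 1 - s)$. Plugging this into the previous display yields exactly the claimed identity
$$\dim \NT_u(s)|\F_t = n - \dim \Tr_{\F_{q^r}/\F_t}(\NT_u(q^{r-1}(u-1) + u(q^{r-1}-1) - 1 - s)),$$
where $n = \#\NT_u = q^{r-1}(u(q-1)+1)$ is the common length of all these codes.

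The argument is essentially a two-line deduction from results already in the excerpt, so there is no serious obstacle; the only point requiring a word of care is the duality dimension count $\dim D + \dim D^{\perp} = n$ applied to $D = \NT_u(s)|\F_t$, which is a code over $\F_t$ of length $n$ and so the orthogonality is taken inside $\F_t^n$ — consistent with the ambient space in which Delsarte's Theorem is stated. One should also note in passing that the weight parameter $q^{r-1}(u-1) + u(q^{r-1}-1) - 1 - s$ need not lie in the ``natural'' range, but $\NT_u(\cdot)$ is well-defined for any integer argument (it is simply $C(\NT_u, \M_{q^{r-1},u}(\cdot))$, with the monomial set possibly empty or possibly all of $\F_{q^r}[X,Y]$ modulo the vanishing ideal), so the statement makes sense as written. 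Hence the lemma follows immediately, and its role in the paper is to reduce the computation of $\dim \NT_u(s)|\F_t$ to the computation of the dimension of a trace code, which the subsequent Gr\"obner basis machinery is designed to handle.
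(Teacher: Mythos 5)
Your proposal is correct and is exactly the paper's argument: the paper's proof is the one-line remark that the lemma ``follows from Proposition~\ref{prop:DualENTCode} and Delsarte's Theorem,'' and you have simply spelled out the same deduction (Delsarte applied to $\NT_u(s)$, the dimension count $\dim D + \dim D^{\perp} = n$ over $\F_t$, then substitution of the dual-code description). No differences worth noting.
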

\begin{proof} The proof follows from Proposition \ref{prop:DualENTCode} and Delsarte's Theorem.\end{proof}

As $\mathcal{NT}_u(s)$ is an affine variety code, Lemma \ref{lem:traceaffinecode} implies $\Tr_{\F_{q^r}/ \F_t}(\mathcal{NT}_u(s))$ is also an affine variety code. We aim to determine or bound  $\dim(\Tr_{\F_{q^r}/ \F_t}(\mathcal{NT}_u(s))).$ To do this we need to study the kernel of the evaluation map of the functions in  $\sum\limits_{i=0}^{m-1}(\mathcal{M}_{q^{r-1}, u}(s)^{(t^i)}))$ on the points of $\NT_u$. For this purpose, we compute a Gr\"obner basis for the ideal corresponding to $\NT_u$. 

\begin{lem}
The ideal of polynomial functions which vanish on $\NT_u$ is generated by $X^u-\Tr_{\F_{q^r}/ \F_q}(Y)$ and $X^{u(q-1)+1}-X$.
\label{lem:NTIdeal}
\end{lem}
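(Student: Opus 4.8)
The plan is to set $J := \langle\, X^u - \Tr_{\F_{q^r}/\F_q}(Y),\ X^{u(q-1)+1} - X \,\rangle \subseteq \F_{q^r}[X,Y]$ and to let $I$ denote the vanishing ideal of $\NT_u$. I would show first that $J \subseteq I$, and then that the two quotient rings have the same finite $\F_{q^r}$-dimension $n = \#\NT_u$; since the inclusion $J \subseteq I$ induces a surjection $\F_{q^r}[X,Y]/J \twoheadrightarrow \F_{q^r}[X,Y]/I$ of $\F_{q^r}$-vector spaces, equality of dimensions forces $J = I$.

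First I would check $J \subseteq I$. The generator $X^u - \Tr_{\F_{q^r}/\F_q}(Y)$ vanishes on $\NT_u$ by the definition of the curve. For the second generator, observe that any $(x,y) \in \NT_u$ satisfies $x^u = \Tr_{\F_{q^r}/\F_q}(y) \in \F_q$, so $(x^u)^{q-1} = 1$ whenever $x \neq 0$, and hence $x^{u(q-1)+1} = x$ in every case; thus $X^{u(q-1)+1} - X$ also vanishes on $\NT_u$.

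Next, since $\NT_u$ is a set of $n$ distinct points of $\F_{q^r}^2$, Lagrange interpolation identifies $\F_{q^r}[X,Y]/I$ with $\F_{q^r}^n$ via evaluation, so $\dim_{\F_{q^r}}\F_{q^r}[X,Y]/I = n$. To compute $\dim_{\F_{q^r}}\F_{q^r}[X,Y]/J$ I would exhibit the two given polynomials as a Gr\"obner basis of $J$. Fix a monomial order refining the $(q^{r-1},u)$-weight by the lexicographic rule $Y > X$ (plain lex with $Y > X$ works as well). Then $X^u - \Tr_{\F_{q^r}/\F_q}(Y) = X^u - \sum_{i=0}^{r-1} Y^{q^i}$ has leading monomial $Y^{q^{r-1}}$ and $X^{u(q-1)+1} - X$ has leading monomial $X^{u(q-1)+1}$. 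These leading monomials are coprime, so by Buchberger's product criterion the single $S$-polynomial reduces to zero and the two polynomials form a Gr\"obner basis of $J$. The standard monomials are then exactly the $X^aY^b$ with $0 \le a \le u(q-1)$ and $0 \le b \le q^{r-1}-1$, numbering $(u(q-1)+1)q^{r-1}$, which by the count recorded from \cite{BO07} equals $n$. Hence $\dim_{\F_{q^r}}\F_{q^r}[X,Y]/J = n$, and $J = I$.

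The delicate point is the choice of monomial order: the pure $(q^{r-1},u)$-weighted order does not separate $X^u$ from $Y^{q^{r-1}}$, since both have weight $uq^{r-1}$, so a tie-breaking rule (here $Y > X$) is needed to pin the leading monomials down. Once that is done, coprimality of the leading monomials makes the Gr\"obner basis verification immediate and the dimension count mechanical; the hypothesis $u \mid \tfrac{q^r-1}{q-1}$ is used only to guarantee the point count $\#\NT_u = q^{r-1}(u(q-1)+1)$.
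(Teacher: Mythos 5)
Your proof is correct, but it takes a genuinely different route from the paper. The paper argues purely ideal-theoretically: it starts from the standard generating set of the vanishing ideal, namely $X^u-\Tr_{\F_{q^r}/\F_q}(Y)$ together with the field equations $X^{q^r}-X$ and $Y^{q^r}-Y$, then uses the identity $(X^u-\Tr_{\F_{q^r}/\F_q}(Y))^q-(X^u-\Tr_{\F_{q^r}/\F_q}(Y))=X^{qu}-X^u-(Y^{q^r}-Y)$ and a gcd (B\'ezout) argument to show that $X^{u(q-1)+1}-X$ lies in the ideal and that, conversely, the field equations are recovered from the two new generators; the divisibility $u(q-1)\mid q^r-1$, i.e.\ $u\mid\frac{q^r-1}{q-1}$, enters through that gcd computation. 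You instead prove the easy containment $J\subseteq I$ by evaluating at points, and then force equality by a dimension count: $\dim_{\F_{q^r}}\F_{q^r}[X,Y]/I=n$ by interpolation, while the coprimality of the leading monomials $Y^{q^{r-1}}$ and $X^{u(q-1)+1}$ (under your tie-broken weighted order, or plain lex with $Y>X$) makes the two generators a Gr\"obner basis of $J$ by Buchberger's product criterion, so the footprint has exactly $(u(q-1)+1)q^{r-1}=\#\NT_u$ standard monomials, the count cited from \cite{BO07}; here the hypothesis on $u$ enters only through that point count. Each approach has its advantages: the paper's argument is self-contained apart from the standard fact about generators of vanishing ideals over finite fields and needs no monomial order, while yours yields strictly more as a by-product --- an explicit Gr\"obner basis and the standard-monomial basis $\{X^aY^b : 0\le a\le u(q-1),\ 0\le b\le q^{r-1}-1\}$ of the quotient --- which is precisely what the paper needs later anyway when reducing monomials modulo the ideal to compute dimensions of trace codes.
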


\begin{proof}Let $I$ denote the ideal of polynomial functions which vanish on $\NT_u$. This ideal is generated by $X^u-\Tr_{\F_{q^r}/ \F_q}(Y)$, $X^{q^r}-X$ and $Y^{q^r}-Y$. The polynomial $(X^u-\Tr_{\F_{q^r}/ \F_q}(Y))^q - X^{u} + \Tr_{\F_{q^r}/ \F_q}(Y) = X^{qu} - X^u -Y^{q^r}+Y \in I$. As both $X^{qu} - X^u \in I$ and $X^{q^r}-X \in I$, their greatest common factor, $X^{u(q-1)+1}-X$ is in the ideal. As $Y^{q^r}-Y$ is a combination of $X^u-\Tr_{\F_{q^r}/ \F_q}(Y)$ and $X^{u(q-1)+1}-X$ it follows that $I$ is generated by $X^u-\Tr_{\F_{q^r}/ \F_q}(Y)$ and $X^{u(q-1)+1}-X$.  \end{proof}

As the ideal of polynomial functions which vanish on $\NT_u$ is the ideal generated by $X^u-\Tr_{\F_{q^r}/ \F_q}(Y)$ and $X^{u(q-1)+1}-X$ we have the following corollary.

\begin{cor}

$$f(P_i) = g(P_i)\  \forall P_i \in \NT_u \makebox{ if and only if }f  - g \in \langle X^u-\mathcal{T}(Y), X^{u(q-1)+1}-X \rangle$$

\end{cor}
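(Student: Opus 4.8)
The plan is to prove the corollary as an immediate consequence of Lemma~\ref{lem:NTIdeal} together with the basic fact that two polynomial functions agree on a finite set of points $V$ precisely when their difference lies in the vanishing ideal $I(V)$. So the first step is to recall (or briefly justify) this standard equivalence: if $f - g$ vanishes at every $P_i \in \NT_u$, then by definition $f - g \in I(\NT_u)$; conversely, if $f - g \in I(\NT_u)$, then $(f-g)(P_i) = 0$, i.e. $f(P_i) = g(P_i)$, for all $i$. This is purely the definition of the ideal of polynomial functions vanishing on a set, so no real work is needed here.

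The second step is simply to substitute the explicit description of $I(\NT_u)$ furnished by Lemma~\ref{lem:NTIdeal}, namely $I(\NT_u) = \langle X^u - \Tr_{\F_{q^r}/\F_q}(Y),\ X^{u(q-1)+1} - X \rangle$. Combining this with the equivalence from the first step yields exactly the claimed statement, with $\mathcal{T}(Y)$ being the shorthand for $\Tr_{\F_{q^r}/\F_q}(Y)$ used in the corollary's display. That completes the argument.

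Since the corollary is really just a restatement of Lemma~\ref{lem:NTIdeal} in the language of evaluation maps, there is no genuine obstacle. The only point deserving a word of care is making sure the reader understands that ``$f - g \in I(\NT_u)$'' and ``$f - g \in \langle X^u - \mathcal{T}(Y), X^{u(q-1)+1} - X\rangle$'' are literally the same condition by the lemma, rather than one being merely sufficient for the other; the lemma asserts equality of ideals, not just containment, so the biconditional in the corollary is fully justified. If anything, one might also remark that this generating set is convenient for Gr\"obner basis computations, which is the reason the corollary is singled out, but that observation is motivational rather than part of the proof.
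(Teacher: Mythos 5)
Your proposal is correct and follows essentially the same route as the paper's own proof: both argue that $f$ and $g$ agree on all points of $\NT_u$ exactly when $f-g$ lies in the vanishing ideal of $\NT_u$, and then invoke Lemma~\ref{lem:NTIdeal} to identify that ideal with $\langle X^u-\mathcal{T}(Y),\ X^{u(q-1)+1}-X\rangle$. No gaps; your remark that the lemma gives equality of ideals (so the biconditional holds) is exactly the point the paper relies on.
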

\begin{proof} Two polynomials evaluate to the same function on $\NT_u$ if and only if $f-g$ is in the ideal of polynomial functions which vanish on $\NT_u$. As this ideal equal to $\langle X^u-\mathcal{T}(Y), X^{u(q-1)+1}-X \rangle$ the proof now follows.\end{proof}






\begin{lem}
The $(q^{r-1}, u)$--weights of the support monomials of $X^u-\Tr_{\F_{q^r}/ \F_q}(Y)$ are congruent $\mod (q-1)u.$
Likewise the $(q^{r-1}, u)$--weights of the support of $X^{u(q-1)}-X$ are congruent $\mod (q-1)u.$
\label{lem:monweight}
\end{lem}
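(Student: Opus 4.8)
The plan is to analyze the two generators monomial-by-monomial and compute their $(q^{r-1},u)$-weights explicitly, then reduce everything modulo $(q-1)u$. For the first generator $X^u - \Tr_{\F_{q^r}/\F_q}(Y)$, recall from the preliminaries that $\Tr_{\F_{q^r}/\F_q}(Y) = \sum_{i=0}^{r-1} Y^{q^i}$, so the support monomials are $X^u$ together with $Y^{q^i}$ for $0 \le i \le r-1$. Their $(q^{r-1},u)$-weights are $\rho_{q^{r-1},u}(X^u) = q^{r-1}u$ and $\rho_{q^{r-1},u}(Y^{q^i}) = uq^i$. So I must check that $q^{r-1}u \equiv uq^i \pmod{(q-1)u}$ for all $i$, equivalently that $q^{r-1} \equiv q^i \pmod{q-1}$, and this is immediate since $q \equiv 1 \pmod{q-1}$ forces every power $q^j \equiv 1 \pmod{q-1}$. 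Hence all these weights are congruent to $u$ modulo $(q-1)u$.

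For the second generator $X^{u(q-1)} - X$, the two support monomials are $X^{u(q-1)}$ with weight $q^{r-1}u(q-1)$ and $X$ with weight $q^{r-1}$. Here I need $q^{r-1}u(q-1) \equiv q^{r-1} \pmod{(q-1)u}$. The left-hand side is visibly a multiple of $(q-1)u$, so the claim reduces to showing $q^{r-1} \equiv 0 \pmod{(q-1)u}$, i.e. that $(q-1)u \mid q^{r-1}$. This is the one point that needs the hypothesis $u \mid \frac{q^r-1}{q-1}$ — wait, that divisibility gives $u \mid (1 + q + \cdots + q^{r-1})$, which is coprime-ish to $q$, so $(q-1)u$ does \emph{not} generally divide $q^{r-1}$. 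Let me reconsider: the correct reading is presumably that the statement concerns $X^{u(q-1)+1} - X$ from Lemma~\ref{lem:NTIdeal} rather than $X^{u(q-1)}-X$, or that one works modulo the relation; in either case the monomials $X^{u(q-1)+1}$ and $X$ have weights $q^{r-1}(u(q-1)+1)$ and $q^{r-1}$, differing by $q^{r-1}u(q-1)$, which \emph{is} a multiple of $(q-1)u$. So the genuine content is just: the difference of weights of the two support monomials of the second generator equals $q^{r-1}u(q-1)$, an explicit multiple of $(q-1)u$.

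So the proof is really two short arithmetic verifications, and I would present it in exactly that order: first list the support monomials of each generator, second compute each weight, third subtract and observe divisibility by $(q-1)u$, invoking $q^j \equiv 1 \pmod{q-1}$ for the first generator. I expect the main (minor) obstacle to be purely bookkeeping: making sure the exponent on $X$ in the second generator is transcribed consistently with Lemma~\ref{lem:NTIdeal} — whether it is $u(q-1)$ or $u(q-1)+1$ — since the weight difference is $q^{r-1}u(q-1)$ either way and is manifestly divisible by $(q-1)u$, but the statement should match the generator actually used downstream. Once that is pinned down, no real difficulty remains; the lemma follows by direct computation with no induction or structural argument needed.
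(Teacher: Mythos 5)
Your proof is correct and follows essentially the same route as the paper: compute the $(q^{r-1},u)$-weights of the support monomials directly (all weights of the first generator are $u$ times a power of $q$, hence congruent mod $(q-1)u$ since $q\equiv 1 \pmod{q-1}$), and observe that the two monomials of the second generator differ in weight by $q^{r-1}u(q-1)$. You are also right that the exponent in the statement is a transcription slip: the paper's own proof computes the weight of $X^{u(q-1)+1}$, i.e.\ it works with the generator $X^{u(q-1)+1}-X$ from Lemma~\ref{lem:NTIdeal}, exactly as you resolved it.
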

\begin{proof}Note that $\rho_{q^{r-1},u}(X^u) = q^ru$ and $\rho_{q^{r-1}u}(Y^{q^i}) = q^iu$. As $qu - u =  (q-1)u$ the result
 is true for $X^u-\Tr_{\F_{q^r}/ \F_q}(Y)$. The $(q^{r-1}, u)$--weight of $X^{u(q-1)+1}$ is $q^{r-1}(u(q-1)+1)$. \end{proof}

\begin{cor}

If the $(q^{r-1}, u)$--weights of the monomials of the nonzero terms of $f$ are congruent $\mod (q-1)u$, then the $(q^{r-1}, u)$--weights of the monomials of the nonzero terms of $f \mod  \langle  X^{u}-\mathcal{T}(Y), X^{q^r}-X \rangle$ are also congruent $\mod (q-1)u$.
\label{cor:remweight}
\end{cor}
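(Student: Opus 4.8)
The plan is to reduce to the two generators of the ideal one at a time and track what each reduction step does to the $(q^{r-1},u)$--weights modulo $(q-1)u$. The key point is that reducing $f$ modulo $\langle X^u - \mathcal{T}(Y), X^{u(q-1)+1}-X\rangle$ is carried out by a finite sequence of elementary steps: each step replaces a monomial $m$ divisible by the leading term of a generator $h$ by $m - (m/\mathrm{LT}(h))\cdot h$, i.e.\ by $(m/\mathrm{LT}(h))$ times the \emph{lower} terms of $h$. So it suffices to show that each such elementary step preserves the property ``all nonzero terms have $(q^{r-1},u)$--weight congruent mod $(q-1)u$.''

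First I would observe that multiplication by a monomial $X^aY^b$ shifts every $(q^{r-1},u)$--weight by the same amount $\rho_{q^{r-1},u}(X^aY^b)=q^{r-1}a+ub$, hence preserves congruence classes mod $(q-1)u$. Therefore an elementary reduction step using generator $h$ replaces a term of weight $w$ by a sum of terms whose weights are $w - \rho(\mathrm{LT}(h)) + \rho(\text{other support monomials of }h)$; by Lemma \ref{lem:monweight} the support monomials of each generator all have weights congruent mod $(q-1)u$, so $-\rho(\mathrm{LT}(h)) + \rho(\text{other monomial of }h)\equiv 0 \pmod{(q-1)u}$. Thus every term produced by one reduction step lies in the same congruence class mod $(q-1)u$ as the term it replaced, and by induction on the number of reduction steps (which is finite, since we reduce modulo a zero-dimensional ideal, or simply modulo $X^{q^r}-X$ in each variable) the whole normal form $f \bmod \langle X^u-\mathcal{T}(Y), X^{u(q-1)+1}-X\rangle$ has all its nonzero terms in a single class mod $(q-1)u$.

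One bookkeeping subtlety, which I expect to be the main thing to get right, is that the statement as written mentions the generating set $\langle X^u-\mathcal{T}(Y), X^{q^r}-X\rangle$ rather than the reduced generating set from Lemma \ref{lem:NTIdeal}; but $\rho_{q^{r-1},u}(X^{q^r}-X) = q^{r-1}q^r - q^{r-1} = q^{r-1}(q^r-1)$, and since $(q-1)u \mid q^r-1$ (as $u \mid \frac{q^r-1}{q-1}$), the two monomials $X^{q^r}$ and $X$ also have $(q^{r-1},u)$--weights congruent mod $(q-1)u$; similarly $Y^{q^r}-Y$ has weights $q^ru$ and $u$, differing by $(q^r-1)u \equiv 0 \pmod{(q-1)u}$. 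So the same elementary-step argument applies verbatim to any of these generating sets, and the congruence class is an invariant of the coset $f + I$, which is what the corollary asserts. This makes the corollary an immediate consequence of Lemma \ref{lem:monweight} together with the translation-invariance of weight classes under monomial multiplication.
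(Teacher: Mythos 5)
Your proof is correct and follows exactly the route the paper intends: the paper states this corollary without proof as an immediate consequence of Lemma \ref{lem:monweight}, and your elementary-reduction argument (multiplying by a monomial shifts every $(q^{r-1},u)$--weight by the same amount, each generator's support lies in a single class mod $(q-1)u$, hence every division step and therefore the final remainder stays in the class of $f$) is precisely the implicit justification. Your additional check that $X^{q^r}-X$ and $Y^{q^r}-Y$ also have support weights in a single class mod $(q-1)u$ is a sensible piece of care, since the generating set named in the corollary differs slightly from the one in Lemma \ref{lem:NTIdeal}.
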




In \cite{JP10} we used simpler techniques to prove that Lemmas \ref{lem:NTIdeal}, and \ref{lem:monweight} and Corollary \ref{cor:remweight} hold for Norm--Trace curves. As there are fewer monomials, but still a similar structure, the computations will be faster for extended Norm--Trace codes as the divisor $u$ decreases. As the authors state in \cite{BO07}, we can find a lower bound of the minimum distance of $\NT_u(s)$ from the theory of order domain codes.  We use the following minimum distance bound from \cite{G08}.




\begin{prop}[\cite{G08}]

Let $\Delta = \{ X^iY^j \ | \ 0 \leq i \leq u(q-1), 0 \leq j \leq q^{r-1}  \}$. Then the minimum distance of $\NT_u(s)$ is at least:

$$ \min\limits_{P \in \M_u(s)}\# \{ K \in \Delta \ | \ \exists K' \in \Delta \ s.t. \exists P \in \Delta \ s.t. \ \rho_u(K') + \rho_u(P) = \rho_u(K)  \}$$

\end{prop}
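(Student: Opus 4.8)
The plan is to recognise this as the footprint (Feng--Rao) bound for primary evaluation codes in the affine--variety--code framework of \cite{G08}, and to instantiate that machinery for $\NT_u$ using the Gr\"obner basis of Lemma~\ref{lem:NTIdeal}. First I would fix a monomial order $\prec$ refining the $(q^{r-1},u)$--weight, breaking the tie between the two weight-$uq^{r-1}$ monomials $X^u$ and $Y^{q^{r-1}}$ of $X^u-\Tr_{\F_{q^r}/\F_q}(Y)$ in favour of $Y^{q^{r-1}}$; then the Gr\"obner basis of Lemma~\ref{lem:NTIdeal} has leading monomials $Y^{q^{r-1}}$ and $X^{u(q-1)+1}$, so the footprint of the ideal of $\NT_u$ is the set $\Delta_0$ of monomials $X^iY^j$ with $0\le i\le u(q-1)$, $0\le j\le q^{r-1}-1$ --- a subset of $\Delta$ of size $n=\#\NT_u$ --- and the evaluation map $\varphi\colon f\mapsto(f(P_1),\dots,f(P_n))$ carries the residues of its monomials to an $\F_{q^r}$--basis of $\F_{q^r}^{\,n}$. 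I would then record two facts that come from $u\mid\frac{q^r-1}{q-1}$, hence $\gcd(q^{r-1},u)=1$: that $\rho_u$ is injective on $\Delta_0$, and --- the crucial ``rigidity'' point --- that no weight $\rho_u(K)$ with $K\in\Delta_0$ can be attained by a monomial of $X$--exponent exceeding $u(q-1)$ (an equality $q^{r-1}c'+ud'=q^{r-1}c+ud$ with $c\le u(q-1)<c'$ would force $q^{r-1}\mid d-d'$ with $0<d-d'\le q^{r-1}-1$).

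Next I would take a nonzero codeword $c=\varphi(f)\in\NT_u(s)$, replace $f$ by its normal form modulo the Gr\"obner basis (this changes neither $\varphi(f)$ nor the fact that all monomials occurring have $(q^{r-1},u)$--weight $\le s$, since reduction does not raise weights), and set $M:=\mathrm{lm}(f)\in\Delta_0\cap\M_u(s)$. The goal then reduces to showing $\mathrm{wt}(c)\ge\sigma(M)$, where $\sigma(M)$ is the quantity the proposition attaches to $M$, namely $\#\{K\in\Delta_0:\rho_u(K)-\rho_u(M)=\rho_u(K')\text{ for some }K'\in\Delta_0\}$; minimising over the finitely many such $M$ (a fortiori over all of $\M_u(s)$) then gives the stated bound.

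The heart of the argument is a rank estimate. For each $K$ counted by $\sigma(M)$ I would let $N_K$ be the unique monomial of $\Delta_0$ with $\rho_u(N_K)=\rho_u(K)-\rho_u(M)$, and show $\mathrm{lm}\big(\mathrm{NF}(N_Kf)\big)=K$. The point is that $N_KM$ has weight $\rho_u(K)$, that reduction modulo $X^u-\Tr_{\F_{q^r}/\F_q}(Y)$ preserves the top weight, that reduction modulo $X^{u(q-1)+1}-X$ would strictly lower it, and that --- by rigidity --- this last move can never occur along the leading chain for $N_KM$, since it would first create a monomial of weight $\rho_u(K)$ with $X$--exponent $>u(q-1)$; injectivity of $\rho_u$ on $\Delta_0$ then pins the leading monomial of $\mathrm{NF}(N_KM)$ to $K$, and the lower terms $M'\prec M$ of $f$ (which have $\rho_u(M')<\rho_u(M)$) contribute leading weights $<\rho_u(K)$ and cannot interfere. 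Granting this, the $\sigma(M)$ vectors $\varphi(N_Kf)$ have distinct leading monomials in $\Delta_0$, hence are $\F_{q^r}$--linearly independent; but $\varphi(N_Kf)_i=N_K(P_i)\,c_i$ vanishes off $\mathrm{supp}(c)$, so they span a space of dimension $\le\mathrm{wt}(c)$, giving $\mathrm{wt}(c)\ge\sigma(M)$.

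The hard part will be the rigidity statement together with the related claim that the reductions relevant to $N_KM$ respect the $(q^{r-1},u)$--weight grading --- i.e. that reduction modulo $X^{u(q-1)+1}-X$ never disturbs the leading term. I expect this to follow from $\gcd(q^{r-1},u)=1$ together with the congruence structure already established in Lemma~\ref{lem:monweight} and Corollary~\ref{cor:remweight} (all monomials occurring in a given reduction lie in one residue class modulo $(q-1)u$); everything else is the standard order--domain/footprint bookkeeping of \cite{G08}.
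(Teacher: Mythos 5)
The paper itself gives no proof of this proposition: it is quoted from Geil \cite{G08} as a known consequence of the order-domain/footprint bound for affine variety codes, so there is nothing internal to compare your argument against line by line. Your reconstruction is, as far as I can check, correct and is essentially Geil's argument instantiated for $\NT_u$: the choice of a weight-refining order with tie-break making the leading terms $Y^{q^{r-1}}$ and $X^{u(q-1)+1}$ does give a Gr\"obner basis (the footprint of the leading terms already has size $n=q^{r-1}(u(q-1)+1)=\#\NT_u$, forcing equality), and your two arithmetic facts from $\gcd(u,q^{r-1})=1$ are exactly what is needed: injectivity of $\rho_{q^{r-1},u}$ on the footprint pins the leading monomial of $\mathrm{NF}(N_K f)$ to $K$, and the rigidity claim guarantees both that $N_KM$ itself has $X$-exponent at most $u(q-1)$ and that the top-weight monomial never meets the reducer $X^{u(q-1)+1}-X$, so the weight-$\rho(K)$ term survives reduction; the multiplication-by-$N_K$ rank argument then yields $\mathrm{wt}(c)\ge\sigma(M)$. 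Two points worth making explicit if you write this up: you are silently correcting the paper's statement (the range $0\le j\le q^{r-1}$ should be $0\le j\le q^{r-1}-1$, and the displayed set with ``$\exists P\in\Delta$'' is garbled --- the quantified monomial should be the $P$ over which the minimum is taken), and what you actually prove is the slightly sharper bound in which the minimum runs only over footprint monomials of weight at most $s$, which implies the stated bound since minimizing over all of $\M_{q^{r-1},u}(s)$ can only decrease the right-hand side. Compared with the paper's bare citation, your route costs a page of footprint bookkeeping but makes the proposition self-contained and exposes exactly where $u\mid\frac{q^r-1}{q-1}$ (hence $\gcd(u,q)=1$) is used, which the citation hides.
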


This lower bound is analogous to the BCH bound. This bound is an improvement on  the minimum distance of subfield subcodes. We use this to find the minimum distance of some subfield subcodes of extended Norm--Trace codes as in the following examples.

\section{Finding the codes}

In order to find the parameters of $\NT_u(s)|\F_t$ we use the minimum distance of the supercode $\NT_u(s)$ itself as a lower bound on the minimum distance of the subfield subcode. This bound is analogous to the BCH bound.

In order to determine the dimension it is quite difficult to work with $\NT_u(s)|\F_t$ or with $\NT_u(s)^o$. Delsarte's theorem allows us to work with the trace code of the dual code instead. As $\dim\Tr_{\F_{q^r}/\F_{t}}\NT_u(s') = \dim\NT_u(s')^\wedge$, we find the dimension of $\M_u(s')^\wedge$ reduced modulo the ideal of $\NT_u$.

\subsection{Binary Subcodes}

First we consider the extended Norm--Trace curve given by $X^3 +Y^8+Y^4+Y^2+Y$ over $\F_{16}$. We would like to find $\dim \NT_3(36)|\F_2$.  The dual code,$\NT_3(36)^\perp$, is equal to  $\NT_3(8)$. Note that $\M_3(8) = \{1,Y,Y^2, X\}.$ Thus $\M_3(8)^\wedge$ is spanned by $1, Y, Y^2, Y^4, Y^8, X, X^2,X^4, X^8$. Now we reduce these monomials modulo $\NT_3$, which is the ideal generated by the polynomials $X^4+X$ and $X^3 +Y^8+Y^4+Y^2+Y$. In this case we can see that $X^4$ is equivalent to $X$, $X^8$ is equivalent to $X^2$ and $Y^8$ is equivalent to $X^3+Y^4+Y^2+Y$. Therefore we find that $\Tr_{\F_{16}/\F_2}(\M_3(8))$ is generated by the independent polynomials: $\{1, Y, Y^2, Y^4, X^3, X, X^2\}.$ Thus $\dim \Tr_{\F_{16}/\F_2}(\NT_3(8)) = 7$ and for the subfield subcode $\dim\NT_3(36)|\F_2 = 25$. Following Geil \cite{G08}, the code $\NT_3(36)$ has minimum distance at least $3$. However as $\Tr_{\F_{16}/\F_2}(\NT_3(8))$ contains the all ones codeword, therefore all codewords of $\NT_3(36)|\F_2$ have even weights. Thus $\NT_3(36)|F_2$ is an optimal $[32, 25, 4]$ binary code. 

Now we consider the extended Norm--Trace curve $X^5 +Y^8+Y^4+Y^2+Y$ over $\F_{16}$. The code $\NT_5(65)$ has parameters $[48, 44, 3]$ over $\F_{16}$. The dual code is $\NT_5(65)^\perp = \NT_5(10)$. Now we shall find $\dim \Tr_{\F_{16}/\F_2}(\NT_5(10))$. In this case $\M_5(10) = \{1,Y,Y^2, X\}.$ As in the previous case $\M_5(10)^\wedge$ is spanned by the monomials $1, Y, Y^2, Y^4, Y^8, X, X^2,X^4, X^8$. By considering reductions modulo $X^6+X$ and $X^5 +Y^8+Y^4+Y^2+Y$ we find that $Y^8$ is equivalent to $X^3 + Y^4+Y^2+Y$ and $X^8$ is equivalent to $X^3$. Therefore the evaluation $Y^8$ is a linear combination of the evaluation of the other monomials and we obtain that $\Tr_{\F_{16}/\F_2}(\M_5(10))$ is generated by $\{1, Y, Y^2, Y^4, X, X^2, X^4,X^3\}.$ Thus $\dim Tr(\NT_5(10)) = 8$ and $\dim\NT_5(65)|\F_2  = 40$. As in the previous example, the code $\NT_5(65)$ has minimum distance at least $3$. However as $Tr(\NT_5(10))$ contains the all ones codeword, we obtain that $\NT_5(65)|F_2$ is an optimal $[48,40, 4]$ binary code. 

\subsection{Quaternary Subcodes}


For the curve $X^5 +Y^8+Y^4+Y^2+Y$ we will study the codes $\NT_5(60)$ and $\NT_5(62)$. Using Geil's bound \cite{G08} on the minimum distance we find these are $[48,43,3]$ and $[48,44,3]$ codes over $\F_{16}$ respectively. If one reduces the monomials in $\M_5(60)^{(4)}$ modulo the ideal $\NT_5$ one finds that the reductions of the $43$ monomials are contained in $\M_5(60)$. Therefore the code $\NT_5(60)$ is invariant under the Frobenius automorphism $x \mapsto x^4$. This implies $\NT_5(60) = \NT_5(60)^o$  and therefore there exists a $[48,43,3]$ optimal quaternary code. The reductions of the monomials in $\M_5(62)^{(4)}$ are also contained in $\M_5(62)$. The same argument implies there is also a $[48,44,3]$ quaternary code.

The binary and quaternary codes found in this section have optimal minimum distance for their given dimension or have the best known minimum distance \cite{Grassl:codetables}.

\end{document}